 \newcommand{\hm}[1]{\leavevmode{\marginpar{\tiny%
 $ \hbox to 0mm{\hspace*{-0.5mm} $ \leftarrow $ \hss}%
 \vcenter{\vrule depth 0.1mm height 0.1mm width \the\marginparwidth}%
 \hbox to
 0mm{\hss $ \rightarrow $ \hspace*{-0.5mm}} $ \\\relax\raggedright #1}}}
\newcommand{\eps}{\varepsilon}
\newcommand{\R}{\mathbb{R}}
\newcommand{\cl}{\operatorname{cl}}
\newcommand{\tvert}[1]{{\left\vert\kern-0.25ex\left\vert\kern-0.25ex\left\vert #1
    \right\vert\kern-0.25ex\right\vert\kern-0.25ex\right\vert}}
\renewcommand{\epsilon}{\varepsilon}
\DeclareMathOperator{\Deg}{Deg}
\newtheorem{theorem}{Theorem}[section]
\newtheorem{lemma}[theorem]{Lemma}
\newtheorem{proposition}[theorem]{Proposition}
\theoremstyle{definition}
\theoremstyle{remark}
\begin{document}
 \title{Perpetual cutoff method and discrete Ricci curvature bounds with exceptions}
 \author{Florentin M\"unch}
 \date{\today}
 \maketitle
 %
 %
 %
 %
 \begin{abstract}
One of the main obstacles regarding Barky Emery curvature on graphs is that the results require a global uniform lower curvature bounds where no exception sets are allowed.
We overcome this obstacle by introducing the
perpetual cutoff method.
As applications, we prove gradient estimates only requiring curvature bounds on parts of the graph. 
Moreover, we sharply  upper bound the distance to the exception set for graphs having uniformly positive Bakry Emery curvature everywhere but on the exception set. 
 \end{abstract}

\section{Introduction}
Recently, there has been tremendous interest in Ricci curvature on graphs (see 
\cite{Sch99, LinYau10,LLY11,BHLLMY15,  Mun17,johnson2015discrete,kempton2017relationships,
fathi2018curvature,liu2017rigidity,
liu2017distance,GongLin15,
liu2016bakry,cushing2016bakry,chung2014harnack,
yamada2017curvature,gao2016one,HornLinLiuYau14}
for Bakry-Emery Ricci curvature; see
\cite{BJL12,JL14,ni2015ricci,Oll09,Ollivier2007,
ollivier2012curved,eldan2017transport,
bhattacharya2015exact,paeng2012volume,
sandhu2015graph,wang2014wireless,rubleva2016ricci,MuenchWojciechowski17}
for Ollivier Ricci curvature, and see
\cite{maas2017entropic,fathi2016entropic,
erbar2012ricci,najman2017modern,erbar2016poincare,
EMT14}
for entropic Ricci curvature).

An important result for Bakry Emery curvature on graphs is the gradient estimate 
\[
\Gamma P_t f \leq e^{-2Kt}P_t \Gamma  f,
\]
for the heat semigroup $P_t = e^{t\Delta}$, assuming the curvature dimension condition $CD(K,\infty)$ everywhere, see \cite{HuaLin17,GongLin15,LinLiu15,LiuP14}. As soon as one vertex of the graph behaves badly and has negative curvature, the gradient estimate potentially gets far off from what would expect. For large girth graphs, a geometric cutoff might be possible, i.e., one can hope that induced subgraphs inherit lower curvature bounds. However this appears to be hopeless for small girth graphs since cutting out edges will possibly reduce connectedness within one-spheres which usually decreases curvature.

We overcome the obstacle of cutting off an exception set $W$ by introducing an analytic cutoff method inspired by the gradient estimate proofs in \cite{MuenchWojciechowski17}.
The idea is to define a modified semigroup $P_t^W$ staying constant in space (but not in time) on the exception set $W$, and behaving like the standard semigroup on the part with curvature bound.
Staying constant on the exception set makes the curvature there invisible since it cannot contribute to the gradient. Behaving like the standard semigroup on the part with curvature bound allows to employ the $\Gamma$-calculus yielding gradient estimates only requiring curvature bounds on parts of the graph, see Section~\ref{sec:GradEstimates}.
In particular in Theorem~\ref{thm:gradEstimate}, we prove
\[
\Gamma P_t^W f \leq e^{-2Kt} P_t f
\]
when assuming $CD(K,\infty)$ only on $V\setminus W$.
As application, we give distance bounds under non-constant Bakry-Emery curvature, see Section~\ref{sec:DistanceBounds}.
In particular, we obtain sharp distance bounds recovering the Bonnet-Myers diameter bounds from \cite{liu2016bakry}. In Theorem~\ref{thm:DistanceBounds}, we will prove
\begin{align*}
d(\cdot, W) \leq \frac {2D}K +1
\end{align*}
if the graph satisfies $CD(K,\infty)$ on $V \setminus W$ for some $K>0$. Here, $D$ denotes the  maximal vertex degree.
This greatly improves a result of \cite{liu2017distance} where it was shown
\[
d(\cdot,W) \leq Ce^{4K_0/K}\frac {D}{\sqrt{KK_0}}  + 1
\]
where an overall curvature bound $Ric \geq -K_0$ is needed, i.e., the distance bound can get arbitrarily far off if the curvature on $W$ behaves badly.
In contrast, our result does not depend on the the curvature of $W$ at all and gives the sharp distance bound one would expect from the Bonnet Myers theorem on graphs (see \cite[Corollary~2.2]{liu2016bakry}).
Finally in Section~\ref{sec:Discussion}, we give a general discussion on the significance of the new cutoff method and point out further research directions for possible interest.

\section{Setup and notation}

We say $G=(V,w,m)$ is a \emph{graph} if $V$ is a countable set called vertex set, if $w:V^2 \to [0,\infty)$ is  symmetric and zero on the diagonal, and if $m:V\to (0,\infty)$. The function $w$ is called \emph{edge weight} and $m$ is called \emph{vertex measure}.
We write $x\sim y$ for $x,y \in V$ if $w(x,y)>0$.
We will only consider \emph{locally finite} graphs, i.e., $\#\{y \in V: y\sim x\} < \infty$ for all $x \in V$.
The (weighted) vertex degree is given by
$\Deg(x) := \sum_y w(x,y)/m(x)$ for $x \in V$.
We will assume that $\Deg$ is bounded as a standing assumption in the paper.
The combinatorial graph distance $d$ on $V$ is given by $d(x,y) := \inf \{n: x=x_0 \sim \ldots \sim x_n=y\mbox{ for some } x_k \in V \}$. We will always assume that $G$ is connected, i.e., $d(x,y)<\infty$ for all $x,y\in V$.
We define the function spaces $C(V):=\R^V$ and $\ell_\infty(V):=\{f \in C(V): \|f\|_\infty < \infty\}$ with $\|f\|_\infty:= \sup_{x\in V} |f(x)| $.
The \emph{graph Laplace operator} $\Delta: C(V) \to C(V)$ is defined by
\[
\Delta f(x):= \frac{1}{m(x)}\sum w(x,y)(f(y) - f(x)).
\]
Denote by $P_t=e^{t\Delta}$ the associated heat semigroup.
We now introduce the Bakry-Emery calculus following \cite{Sch99,LinYau10,BakryGentilLedoux} to define Ricci curvature on graphs.
The gradient form $\Gamma: C(V)^2 \to C(V)$ is given by
\[2\Gamma (f,g) := \Delta(fg) - f\Delta g -g \Delta f  \]
and $\Gamma_2: C(V)^2 \to C(V)$ is given by
\[2\Gamma_2 (f,g) := \Delta\Gamma(f,g) - \Gamma(f,\Delta g) -\Gamma(g, \Delta f).  \]
For convenience, we write $\Gamma f:= \Gamma(f,f)$ and $\Gamma_2 f := \Gamma_2(f,f)$.

We say $G$ satisfies the \emph{curvature dimension condition} $CD(K,n)$ at $x \in V$ if for all $f \in C(V)$,
\[
\Gamma_2 f(x) \geq \frac 1 n (\Delta f)^2(x) + K\Gamma f (x).
\]
This is analog to Bochner's inequality on manifolds and defines the Bakry-Emery curvature on graphs where $K\in \R$ is a lower curvature bound and $n >0$ is an upper dimension bound. 
We say $G$ satisfies $CD(K,n)$ if it is satisfied at all $x\in V$.
The $CD(K,n)$ inequality was initially used as a curvature bound on diffusion semigroups and manifolds in \cite{Bakry87,BakryEmery85}, and later transfered to discrete settings in \cite{Sch99,LinYau10}.
For the sake of clarity, we restrict ourselves to the case $n=\infty$ where the maximum vertex degree in some sense substitutes the notion of dimension.
Note that in case of bounded vertex degree $\Deg \leq D$, one can always trade off the dimension term with a curvature term. In particular,
\[
CD(K,n) \Rightarrow CD(K,\infty) \Rightarrow CD\left(K-\frac{2D} n,n\right)
\]
since $(\Delta f)^2 \leq 2D\Gamma f$ due to Cauchy-Schwarz.

\section{Dini Derivatives, Gamma calculus, and semigroups}

For dealing with the non-smoothening effects of the cutoff, we define the upper and lower left and right \emph{Dini derivatives} for functions $F : I \to \R$  for an interval $I$ via
\[
\overline{\partial_t^{\pm}} F(t) := \limsup_{h \to 0^\pm} \frac{F(t+h) - F(t)}{h}  
\]
and
\[
\underline{\partial_t^{\pm}} F(t) := \liminf_{h \to 0^\pm} \frac{F(t+h) - F(t)}{h}.  
\]
If the limits exist, we write $\partial_t^\pm$ for the left and right derivative.
We next give basic properties of the Dini derivative from
\cite{hagood2006recovering}.
\begin{proposition}[Monotonicity theorem and Theorem~3 from \cite{hagood2006recovering}]
\label{pro:Dini}
Let $F: I \to \R$ be continuous. 
\begin{enumerate}[(i)]
\item
If $\overline{\partial_t^+} F \geq 0$ on the interior of $I$, then $F$ is non-decreasing.
\label{i:Dininonnegative}
\item If $|\partial_t^+ F| < \infty$ everywhere, then
\[
F(b)-F(a) \leq \overline{\int_a^b} \partial_t^+ F(t) dt
\]
on every interval $[a,b]$ where $\overline \int$ is the upper Riemann integral.
\label{i:DiniIntegral}
\end{enumerate}
\end{proposition}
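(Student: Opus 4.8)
The plan is to prove the two parts in order, using part~(i) as the main tool for part~(ii).

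\smallskip
\noindent\emph{Part (i).} Since $F$ is continuous it suffices to show $F(a)\le F(b)$ whenever $a<b$ both lie in the interior of $I$. First I would reduce to a strict inequality by perturbing: for $\eps>0$ put $F_\eps(t):=F(t)+\eps t$, so that $\overline{\partial_t^+}F_\eps=\overline{\partial_t^+}F+\eps\ge\eps>0$ on $(a,b)$. The key step is a ``last exceedance point'' argument: let $t_0:=\sup\{t\in[a,b]:F_\eps(t)\ge F_\eps(a)\}$, which is attained by continuity. If $t_0<b$, then $F_\eps(t)<F_\eps(a)\le F_\eps(t_0)$ for all $t\in(t_0,b]$, so every right-hand difference quotient of $F_\eps$ at $t_0$ is negative and hence $\overline{\partial_t^+}F_\eps(t_0)\le0$; since $t_0\in[a,b)\subset\interior(I)$ this contradicts $\overline{\partial_t^+}F_\eps(t_0)\ge\eps>0$. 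Hence $t_0=b$, i.e.\ $F(b)+\eps b\ge F(a)+\eps a$, and letting $\eps\to0$ gives $F(b)\ge F(a)$.

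\smallskip
\noindent\emph{Part (ii).} Write $g:=\partial_t^+F$, which is finite everywhere by hypothesis, and use the Darboux description $\overline{\int_a^b}g=\inf_P U(g,P)$, where for a partition $P:a=x_0<\dots<x_N=b$ we set $U(g,P):=\sum_{i=1}^N(x_i-x_{i-1})\sup_{[x_{i-1},x_i]}g$. It then suffices to prove $F(b)-F(a)\le U(g,P)$ for each $P$ (vacuous unless each $\sup_{[x_{i-1},x_i]}g$ is finite, so assume this). Define the right-continuous step function $\phi$ on $[a,b)$ by $\phi(t):=\sup_{[x_{i-1},x_i]}g$ for $t\in[x_{i-1},x_i)$; then $\phi\ge g$ pointwise and $\int_a^b\phi=U(g,P)$. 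Now consider
\[
H(t):=\int_a^t\phi(s)\,ds-\bigl(F(t)-F(a)\bigr),
\]
which is continuous with $H(a)=0$. Because $\phi$ is a right-continuous step function, $\partial_t^+\int_a^t\phi(s)\,ds=\phi(t)$ for every $t\in[a,b)$, so $\partial_t^+H(t)=\phi(t)-g(t)\ge0$ there; by part~(i), $H$ is non-decreasing, hence $H(b)\ge H(a)=0$, which is exactly $F(b)-F(a)\le U(g,P)$.

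\smallskip
The step I expect to be most delicate is the treatment of the jump points of the majorant in part~(ii): the identity $\partial_t^+\int_a^t\phi=\phi(t)$ holds only because $\phi$ is chosen \emph{right}-continuous (at a jump the primitive's right derivative equals the right-hand value of $\phi$), and the Darboux construction is precisely what yields such a $\phi$ while still satisfying $\phi\ge g$ at \emph{every} point, jump points included. A minor point is the reduction in part~(i) to interior subintervals together with the perturbation $F+\eps t$, which is needed because the hypothesis controls only the \emph{upper} right Dini derivative and so cannot by itself rule out a net decrease of $F$.
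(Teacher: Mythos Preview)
The paper does not give a proof of this proposition at all: it is stated with attribution to \cite{hagood2006recovering} and then used as a black box. Your argument is correct and entirely self-contained, so there is nothing to compare against in the paper itself.

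A few remarks on the substance. Part~(i) is the standard perturbation-plus-last-time argument for the Dini monotonicity theorem, and your handling of the boundary (reducing to $a,b$ interior by continuity, then noting $t_0\in[a,b)\subset\interior(I)$) is clean. Part~(ii) is also correct: the reduction to a single partition via the Darboux description, followed by applying part~(i) to $H(t)=\int_a^t\phi-(F(t)-F(a))$, works exactly because your step majorant $\phi$ is taken over the \emph{closed} subintervals $[x_{i-1},x_i]$ (so $\phi\ge g$ at every point, including partition points) while being right-continuous (so the primitive has right derivative $\phi(t)$ even at the jumps). Your closing paragraph identifies precisely this as the delicate point, and it is handled correctly. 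The case where some $\sup_{[x_{i-1},x_i]}g=+\infty$ is indeed vacuous, and $\sup_{[x_{i-1},x_i]}g=-\infty$ cannot occur since $g$ is finite-valued.
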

As a preparation for the cutoff method, we show compatibility of the right derivatives, the semigroup, and the $\Gamma$ calculus.

\begin{lemma}\label{lem:DiniPtGamma}
 Let $G=(V,w,m)$ be a graph with bounded vertex degree.
Let $(u_t)_{t\geq 0} \in \ell_\infty (V)$ be continuous in $t$ w.r.t $\|\cdot\|_\infty$. 
Suppose the right derivative $\partial_t^+ u_t(x)$ exists everywhere. Then for $t>s\geq 0$,
\begin{enumerate}[(i)]
\item $\partial_t^+ \Gamma u_t = 2\Gamma(u_t,\partial_t^+ u_t)$.
\label{i:dtGammaU}
\item $\partial_s^+ (P_{t-s} u_s) =  P_{t-s}\left(\partial_s^+ - \Delta \right) u_s$.
\label{i:dtPtU}
\end{enumerate}
\end{lemma}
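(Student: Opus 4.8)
The plan is to prove both statements by reducing the infinite sums defining $\Gamma$, $\Delta$, and $P_t$ to finite or uniformly controlled expressions, so that the difference quotient in $t$ (resp.\ $s$) can be pushed inside past the operators. For part~(\ref{i:dtGammaU}), I would write out $2\Gamma u_t(x) = \Delta(u_t^2)(x) - 2u_t(x)\Delta u_t(x)$, which at each fixed $x$ is a finite linear combination (local finiteness!) of the values $u_t(y)$ for $y$ in the ball of radius $1$ around $x$, together with products $u_t(y)u_t(z)$. Since $t\mapsto u_t(x)$ is continuous and right-differentiable for every $x$, and there are only finitely many terms, the right derivative commutes with this finite sum and the product rule for Dini derivatives applies to each product $u_t(y)u_t(z)$ (here one uses that $u_t$ is bounded and the factors are genuinely right-differentiable, so no Dini subtleties arise — the one-sided derivative of a product of two one-sided-differentiable functions is the usual Leibniz expression). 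Collecting terms and using bilinearity of $\Gamma$ gives $\partial_t^+\Gamma u_t = 2\Gamma(u_t,\partial_t^+ u_t)$.

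For part~(\ref{i:dtPtU}), the situation is genuinely different because $P_{t-s}$ is an infinite sum (or rather an operator given by an infinite series / heat kernel) and $s$ appears in two places: in the subscript $t-s$ and in the argument $u_s$. I would split the difference quotient $\tfrac1h\big(P_{t-s-h}u_{s+h} - P_{t-s}u_s\big)$ as
\[
\frac1h\big(P_{t-s-h}u_{s+h} - P_{t-s-h}u_s\big) + \frac1h\big(P_{t-s-h}u_s - P_{t-s}u_s\big),
\]
using linearity of $P_{t-s-h}$. The second term tends to $-\Delta P_{t-s}u_s = -P_{t-s}\Delta u_s$ by the standard generator property of the heat semigroup (valid since $\Delta$ is bounded under the standing bounded-degree assumption, so $P_r$ is norm-continuous and differentiable in $r$ with derivative $\Delta P_r$). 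The first term equals $P_{t-s-h}\big(\tfrac1h(u_{s+h}-u_s)\big)$, and I would argue it converges to $P_{t-s}\partial_s^+ u_s$: here one needs that the difference quotients $\tfrac1h(u_{s+h}-u_s)$ converge to $\partial_s^+u_s$ pointwise and that $P_{t-s-h}\to P_{t-s}$ in operator norm, combined with a uniform bound on $\|\tfrac1h(u_{s+h}-u_s)\|_\infty$. Adding the two limits gives $P_{t-s}(\partial_s^+ - \Delta)u_s$.

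The main obstacle I anticipate is the justification that $P_{t-s-h}\big(\tfrac1h(u_{s+h}-u_s)\big)\to P_{t-s}\partial_s^+u_s$: pointwise convergence of the difference quotients does not automatically survive application of the (infinite-range) operator $P_{t-s}$, so one needs either a dominated-convergence-type argument using the heat kernel's summability and a local uniform bound on the difference quotients, or a uniform (in $x$, locally in $h$) bound $\|\tfrac1h(u_{s+h}-u_s)\|_\infty \le C$ which is \emph{not} immediate from mere pointwise right-differentiability of $u_s$. I would handle this by first establishing such a local uniform bound from the hypotheses — using continuity of $t\mapsto u_t$ in $\|\cdot\|_\infty$ together with the existence of $\partial_t^+u_t$ and, if necessary, restricting to the regime where the cutoff construction supplies it — and then invoking the equicontinuity of $\{P_r : r \text{ near } t-s\}$ on $\ell_\infty(V)$ to pass to the limit. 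The remaining steps are routine manipulations with bounded operators and finite sums.
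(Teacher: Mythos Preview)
Your approach is essentially the paper's. For~(\ref{i:dtGammaU}) both arguments use that $\Gamma$ at a fixed vertex is a finite sum and then apply the product rule for one-sided derivatives. For~(\ref{i:dtPtU}) the paper also splits the difference quotient into a ``semigroup part'' and a ``function part'', but it adds and subtracts $P_{t-s}u_{s+\varepsilon}$ rather than $P_{t-s-h}u_s$; this has the small advantage that in the function-difference term the operator $P_{t-s}$ stays \emph{fixed}, so one only has to pass the pointwise limit of $\tfrac{1}{\varepsilon}(u_{s+\varepsilon}-u_s)$ through a single positive kernel rather than through the varying family $P_{t-s-h}$, and the mean-value step for the other term uses continuity of $u$ in $\|\cdot\|_\infty$ to handle the moving argument $u_{s+\varepsilon}$. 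The obstacle you flag --- that pointwise convergence of the difference quotients need not survive an infinite-range operator without a uniform $\ell_\infty$ bound --- is genuine and is present in the paper's argument as well; the paper simply asserts the convergence ``due to continuity of $P_t$'' without spelling out the domination, so your proposal is in fact more honest on this point.
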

\begin{proof}
Claim $(\ref{i:dtGammaU})$ follows from the interchangeability of $\partial_t^+$ and $\Delta$, and from the product rule $\partial_t^+(fg) = g\partial_t^+ f + f \partial_t^+ g$.

We now prove claim $(\ref{i:dtPtU})$. Let $x \in V.$ Observe
\begin{align*}
\frac{P_{t-s-\eps}u_{s+\eps} - P_{t-s}u_s}\eps(x) =
\frac{P_{t-s-\eps}u_{s+\eps} - P_{t-s}u_{s+\eps}}\eps (x) + \frac{P_{t-s}(u_{s+\eps} -u_s)}\eps(x).
\end{align*}
Due to continuity of $P_t$, the latter part converges to $P_{t-s} \partial_s^+ u_s(x)$.

Due to the mean value theorem, there exists $\delta=\delta(\eps) \in [0,\eps]$ s.t. the former part equals $-\Delta P_{t-s-\delta} u_{s+\eps}(x)$ 
which, due to continuity of $u_t$ and $P_t$, converges to $-\Delta P_{t-s}u_s(x)$ as $\eps$ and thus $\delta(\eps)$ tend to zero.
Putting together, and applying commutativity of $\Delta$ and $P_t$ on $\ell_\infty(V)$ following from bounded vertex degree, gives $\partial_s^+ (P_{t-s} u_s) = P_{t-s}(\partial_s^+ -\Delta)u_s$ as desired. 
\end{proof}

\section{Perpetual cutoff method}\label{sec:Cutoff}

The perpetual cutoff method we introduce in this section is inspired by the cutoff method given in \cite{MuenchWojciechowski17} to prove gradient estimates for Ollivier curvature. The main difference is that in \cite{MuenchWojciechowski17}, the cutoff threshold depends on space, but not on time, whereas in this article, the cutoff threshold only depends on time, but not on space.

Let $\emptyset \neq W \subset V$.
We define the \emph{closure} 
\[\cl(W) := \{v\in V: d(v,W) \leq 1\}\]
and we define \[\ell_\infty^W(V) := \{f \in \ell_\infty(V): f \geq  f(w) \mbox{ for all } w \in \cl(W)\}.\]
This is the set of all functions being constant on $\cl(W) $ and not smaller outside.
We define the cutoff operator $S^W:\ell_\infty(V) \to \ell_\infty^W(V)$ via
\[
S^W f := f \vee \sup_{\cl(W)} f.
\]
The reason why we define $S^W$ via the closure $\cl(W)$ is that we want the gradient $\Gamma f$ to be zero on $W$ for $f \in \ell_\infty^W(V)$. This turns out to be essential for showing that the gradient estimate is independent of the curvature  on $W$. 
We next define the single time application of the cutoff
$Q_t^W: \ell_\infty^W(V) \to \ell_\infty^W(V)$ via
\[
Q_t^W f := S^W P_t f. 
\]
We finally come to the most relevant definition of the paper.
The (non-linear) perpetual cutoff semigroup $P_t^W : \ell_\infty^W(V) \to \ell_\infty^W(V)$ is given by
\begin{align*}
P_t^W f:= \sup_{t_1 + \ldots +  t_n = t} Q_{t_1}^W \ldots Q_{t_n}^W f
\end{align*}
where $n$ is arbitrary all $t_i$ are positive.
We are now prepared to give a collection of useful properties of $P_t^W$ and its domain $\ell_\infty^W(V)$.
\begin{theorem}\label{thm:PtWProperties}
Let $G=(V,w,m)$ be a graph with bounded vertex degree $\Deg \leq D$. Let $\emptyset \neq W\subset V$.
Let $f \in \ell_\infty^W(V)$. Then for $x\in V$ and $s,t\geq 0$,
\begin{enumerate}[(i)]
\item $P_s^W \circ P_t^W = P_{s+t}^W$. \label{i:PtWSemigroup}
\item
$\|P_t^W f\|_\infty \leq \|f\|_\infty$.
\label{i:contractionSemigroup}
\item
$\|P_t^W f - f\|_\infty \leq 2tD\|f\|_\infty$.
\label{i:Continuity}
\item
$\partial_t^+P_t^W f (x)|_{t=0} = \partial_t^+Q_t^W f (x)|_{t=0}= 
\begin{cases} S^W \Delta f(x)&: f(x)= \inf_V f \\ \Delta  f(x) &: \mbox{otherwise}. 
\end{cases}$ \label{i:QtWPtW}
\item $\|\partial_t^+ P_t^W f\|_\infty \leq \| \Delta P_t^W f\|_\infty$.
\label{i:dtPtW}
\item
$\partial_t^+ \Gamma P_t^W f \leq 2\Gamma(P_t^W f, \Delta P_t^W f)$.\label{i:dtGammaPtW}
\item $\Gamma_2 f \geq 0$ and $\Gamma f = 0$ on $W$.
\label{i:Gamma2PtW}
\item $P_\infty^W f := \lim_{t\to \infty} P_t^W f$ is constant on $V$. \label{i:PinftyW}
\end{enumerate}

\end{theorem}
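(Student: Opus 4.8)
The eight items split so that I would prove them in the order (ii), (iii), (i), (iv), (v), (vi), (vii), (viii): (iv) carries the analytic weight, and everything else either precedes it as a tool or is deduced from it. For (ii) and (iii) the only point is that a single cutoff step is tame: from $S^Wh=h\vee\sup_{\cl W}h$ one gets $\|S^Wh\|_\infty\le\|h\|_\infty$ and $\|S^Wg-S^Wh\|_\infty\le\|g-h\|_\infty$, and since every $h\in\ell_\infty^W(V)$ is constant on $\cl(W)$ at the value $\inf_Vh$, one checks $\|Q_s^Wh-h\|_\infty\le\|P_sh-h\|_\infty\le 2sD\|h\|_\infty$ (using $\|\Delta h\|_\infty\le 2D\|h\|_\infty$ and $P_sh-h=\int_0^s\Delta P_rh\,dr$). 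Hence $\|Q_{t_1}^W\cdots Q_{t_n}^Wf\|_\infty\le\|f\|_\infty$ and, by a telescoping sum, $\|Q_{t_1}^W\cdots Q_{t_n}^Wf-f\|_\infty\le 2D\|f\|_\infty\sum_it_i=2tD\|f\|_\infty$ for every partition; taking suprema over partitions gives (ii) and (iii), and together with (i) the latter yields norm-continuity of $t\mapsto P_t^Wf$ on $[0,\infty)$.

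For (i) I would use three facts: $Q_s^W$ is monotone (so is every composition); $Q_a^WQ_b^Wf\ge Q_{a+b}^Wf$ because $S^WP_bf\ge P_bf$ and $P_a,S^W$ are monotone, so the products over partitions of $[0,t]$ form a directed family under refinement and $P_t^Wf$ is their increasing limit; and $Q_s^W$ commutes with directed suprema — for $g_\alpha\uparrow g$ with bounded norms, stochastic completeness of $P_s$ (from bounded degree) plus dominated convergence give $P_sg_\alpha\uparrow P_sg$ pointwise, whence $\sup_\alpha Q_s^Wg_\alpha=\sup_\alpha(P_sg_\alpha\vee\sup_{\cl W}P_sg_\alpha)=Q_s^Wg$. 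Then $P_s^W(P_t^Wf)$ is the supremum of $Q_{s_1}^W\cdots Q_{s_m}^WQ_{t_1}^W\cdots Q_{t_l}^Wf$ over partitions with $\sum s_j=s$ and $\sum t_k=t$, i.e.\ the supremum of the products over all partitions of $[0,s+t]$ refining $\{t\}$; by cofinality this equals $P_{s+t}^Wf$.

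The core is (iv). I would first record the uniform expansion $\|P_sh-h-s\Delta h\|_\infty\le 2s^2D^2\|h\|_\infty$ (integrate $\Delta P_rh-\Delta h=\Delta(P_rh-h)$). Writing $Q_s^Wh(x)=\max\big(P_sh(x),\sup_{\cl W}P_sh\big)$ for $h\in\ell_\infty^W(V)$ and expanding both entries (using $h\equiv\inf_Vh$ on $\cl(W)$) yields the uniform bound $Q_s^Wh(x)\le h(x)+s\,S^W\Delta h(x)+2s^2D^2\|h\|_\infty$ and, when $h(x)>\inf_Vh$, the identity $Q_s^Wh(x)=P_sh(x)$ for $s$ below a threshold depending only on $D,\|h\|_\infty$ and $h(x)-\inf_Vh$; hence $\partial_s^+Q_s^Wh(x)|_{s=0}$ is $\Delta h(x)$ if $h(x)>\inf_Vh$ and $S^W\Delta h(x)=\max(\Delta h(x),\sup_{\cl W}\Delta h)$ if $h(x)=\inf_Vh$. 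To upgrade from $Q_t^W$ to $P_t^Wf$: the bound $P_t^Wf\ge Q_t^Wf$ gives the matching lower estimate for $\liminf_{t\to0^+}(P_t^Wf(x)-f(x))/t$, and for the upper estimate I iterate the uniform inequalities along a partition, using (iii) to keep each intermediate function within $2tD\|f\|_\infty$ of $f$ (so that one stays in the small-$s$ regime when $f(x)>\inf_Vf$) and $\|\Delta v-\Delta f\|_\infty\le 2D\|v-f\|_\infty$ to replace the running coefficients by $\Delta f(x)+O(t\|f\|_\infty)$ resp.\ $S^W\Delta f(x)+O(t\|f\|_\infty)$, the summed quadratic errors being $O(t\cdot\mathrm{mesh})\to0$. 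This gives (iv), and the limit coincides with the value of $\partial_t^+Q_t^Wf(x)|_{t=0}$ from the $h=f$ case. By (i), $\partial_t^+P_t^Wf(x)=\partial_s^+P_s^W(P_t^Wf)(x)|_{s=0}$ then exists for every $t$ and equals $\Delta u_t(x)$ off $A_t:=\{u_t=\inf_Vu_t\}$ and $S^W\Delta u_t(x)\ge\Delta u_t(x)$ on $A_t$, where $u_t:=P_t^Wf$; since $|S^W\Delta u_t(x)|\le\|\Delta u_t\|_\infty$ this is (v). For (vi), $u_t$ is norm-continuous with $\partial_t^+u_t$ everywhere defined, so Lemma~\ref{lem:DiniPtGamma}(i) gives $\partial_t^+\Gamma u_t=2\Gamma(u_t,\partial_t^+u_t)$; setting $v_t:=\partial_t^+u_t-\Delta u_t\ge0$, supported on $A_t$, one checks $\Gamma(u_t,v_t)\le0$ pointwise — at $x\notin A_t$ because $v_t(x)=0$ while $u_t(y)-u_t(x)<0$ wherever $v_t(y)>0$, and at $x\in A_t$ because $u_t(y)-u_t(x)=0$ for $y\in A_t$ and $v_t(y)-v_t(x)=-v_t(x)\le0$ for $y\notin A_t$ — hence $2\Gamma(u_t,\partial_t^+u_t)\le 2\Gamma(u_t,\Delta u_t)$.

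Item (vii) is immediate: for $x\in W$ every neighbour lies in $\cl(W)$, so $f$ is constant on $\{x\}\cup\{y:y\sim x\}$, whence $\Gamma f(x)=0$ and $\Gamma(f,\Delta f)(x)=0$; as $\Gamma f\ge0$ with $\Gamma f(x)=0$, the vertex $x$ minimises $\Gamma f$, so $\Delta\Gamma f(x)\ge0$ and $2\Gamma_2f(x)=\Delta\Gamma f(x)-2\Gamma(f,\Delta f)(x)\ge0$. For (viii), one shows $\inf_VP_t^Wf$ is non-decreasing (since $P_s^Wg\ge S^WP_sg\ge\inf_Vg$) and $\sup_VP_t^Wf$ is non-increasing (since $\sup_VQ_s^Wg=\sup_VP_sg\le\sup_Vg$), so both converge, to $L\le M$, and the oscillation is non-increasing; it remains to prove $M=L$, after which $\lim_tP_t^Wf\equiv L$. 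The plan for this is a maximum-principle argument: as long as $u_t$ is non-constant one has $\inf_Vu_t<L$ strictly (otherwise $\partial_t^+\inf_Vu_t=\sup_{\cl W}\Delta u_t=0$ forces $\Delta u_t=0$ on $\cl(W)$, and flatness propagates through the connected graph), so on $\{u_t>\inf_Vu_t\}$ one has $\partial_t^+u_t=\Delta u_t$ and $\phi_t:=(u_t-L)^+$ is a non-negative bounded Dini subsolution of the heat equation pinned to $0$ on $\cl(W)$; combining this with the fact that the cutoff keeps driving $\inf_Vu_t$ up toward $L$ (its rate $\sup_{\cl W}\Delta u_t$ has finite time-integral by Proposition~\ref{pro:Dini}(ii)), one concludes $\sup_V\phi_t\to0$. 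Turning the subsolution property and the strict monotonicity of $\inf_VP_t^Wf$ into genuine decay of the full oscillation — in particular handling transient graphs, where bounded nonconstant subharmonic functions vanishing on $\cl(W)$ do exist — is the step I expect to be the main obstacle.
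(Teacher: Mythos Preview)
Your treatment of (i)--(vii) is correct and tracks the paper's proof closely. The one genuine variation is (vi): rather than invoking the contraction inequality $\Gamma S^W h \leq \Gamma h$ as the paper does (which gives $\Gamma Q_t^W f \leq \Gamma P_t f$ and hence the bound via the mean value theorem), you write $\partial_t^+ u_t = \Delta u_t + v_t$ with $v_t \geq 0$ supported on the minimum set $\{u_t=\inf_V u_t\}$ and verify $\Gamma(u_t, v_t) \leq 0$ by a direct sign check at each vertex. That is a clean and arguably more transparent alternative. Your direct argument for (vii) is precisely the content of the cited \cite[Proposition~1.1]{HuaLin16}.

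The gap is (viii), where you propose a maximum-principle/subsolution route and correctly flag transient graphs as the obstacle. The paper bypasses this entirely with a short sandwich argument that you missed. From $Q_t^W h = P_t h \vee \inf_V Q_t^W h$ and $P_t h \geq \inf_V h$ one reads off
\[
P_t h \;\leq\; Q_t^W h \;\leq\; P_t h + \big(\inf_V Q_t^W h - \inf_V h\big),
\]
and iterating along a partition (the infimum increments telescope, constants pass through $P_s$) yields, for every $g\in\ell_\infty^W(V)$,
\[
P_t g \;\leq\; P_t^W g \;\leq\; P_t g + \big(\inf_V P_t^W g - \inf_V g\big).
\]
Now take $g = P_T^W f$ with $T$ so large that $c - \inf_V g < \varepsilon$, where $c:=\lim_{t\to\infty}\inf_V P_t^W f$; the correction term on the right is then at most $\varepsilon$ for all $t\geq 0$, so $\|P_t^W g - P_t g\|_\infty \leq \varepsilon$. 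The paper concludes by invoking convergence of the \emph{linear} semigroup $P_t g$ to a constant, giving $C-c\leq 2\varepsilon$. The key idea you were missing is that once $\inf_V P_t^W f$ has essentially stabilised, the nonlinear cutoff flow stays uniformly $\varepsilon$-close to the ordinary heat flow, so the question reduces to the linear semigroup rather than requiring any subsolution machinery for $P_t^W$ itself.
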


\begin{proof}
We first prove the semigroup property $(\ref{i:PtWSemigroup})$.
Let $S=\{0<\sigma_1 < \ldots < \sigma_m=s\}$ be a partition of $[0,s]$ and $T=\{0<\tau_1 < \ldots < \tau_n=t\}$ a partition of $[0,t]$. We canonically define $Q_S^W := Q_{\sigma_m - \sigma_{m-1}}^W\ldots Q_{\sigma_1}^W$ and $Q_T^W := Q_{\tau_n - \tau_{n-1}}^W\ldots Q_{\tau_1}^W$. Note that $Q_{T'}^W f \geq Q_T^W f$ if $T'$ is a refinement of $T$.
Obviously, $P_t^W f = \sup_T Q_T^W$ where the supremum is taken over all partitions of $[0,t]$.
Observe
\[ 
Q_S^W Q_T^W f = Q_{(t+S) \cup T}^W f \leq P_{t+s}^W
\]
and therefore, $P_s^WP_t^W f \leq P_{t+s}^W f$.
On the other hand let $U$ be partition of $[0,s+t]$ and $U' := U \cup \{t\}$ a refinement.
Let $S:= (U-t) \cap(0,s]$ and $T=U'\cap (0,t]$. Then,
\[
P_s^W P_t^W f \geq  Q_S^W Q_T^W f = Q_{U'}^W f \geq Q_U^W f.
\]
Taking supremum yields $P_s^WP_t^W f \geq P_{t+s}^W f$ which proves $(\ref{i:PtWSemigroup})$.

Claim $(\ref{i:contractionSemigroup})$ is clear as $\inf_V f \leq \inf_V Q_t^W f \leq  \sup_V Q_t^W f \leq \sup_V f$.

For claim $(\ref{i:Continuity})$ observe
$|P_t f - f| \leq 2tD\|f\|_\infty$ giving $|Q_t^W f(x) - f(x)| \leq 2tD\|f\|_\infty$ for $x \in V$ s.t. $P_t f(x)> \sup_{\cl(W)} P_t f$.
Moreover since $f \in \ell_\infty^W(V)$,
\[
\inf_V Q_t^W f - \inf_V f = \sup_{\cl(W)} P_t f - \sup_{\cl(W)} f \leq 2tD\|f\|_\infty. 
\]
Therefore if $P_t f(x) \leq \sup_{\cl(W)} P_t f$,
we have
\[
- 2tD\|f\|_\infty \leq  P_t f(x) - f(x) \leq Q_t^W f(x) - f(x) \leq \inf_V Q_t^W f - \inf_V f \leq 2tD\|f\|_\infty. 
\]
implying $\|Q_t^W f - f\|_\infty \leq 2tD\|f\|_\infty$.
Due to $(\ref{i:contractionSemigroup})$, this gives
$\|P_t^W f - f\|_\infty \leq 2tD\|f\|_\infty$
which proves claim $(\ref{i:Continuity})$.

We now prove $(\ref{i:QtWPtW})$. 
First note that $\inf_V Q_t^W f = \sup_{\cl(W)} P_t f$ and $\inf_V f = \sup_{\cl(W)} f$ since $f \in \ell_\infty^W(V)$.
Due to the mean value theorem and since $\partial_s P_s f = \Delta P_s f$,
\begin{align*}
t \sup_{\cl(W)} \inf_{[0,t]} \Delta P_s f\leq \inf_V Q_t^W f - \inf_V f \leq t \sup_{[0,t]\times \cl(W)} \Delta P_s f.
\end{align*}
Moreover, $\|\Delta P_s f - \Delta f\|_\infty \leq 2D \|P_s f - f\|_\infty \leq  4s D^2 \|f\|_\infty$ and thus,
\begin{align}
\left|\inf_V Q_t^W f - \inf_V f - t\sup_{\cl(W)} \Delta f \right|\leq  4t^2D^2\|f\|_\infty. \label{eq:infQtW}
\end{align}
Since $\inf_V Q_t^W f = \sup_{\cl(W)} P_t f$ and by the mean value theorem, there exists $s \in [0,t]$ s.t.
\[
Q_t^W f = (f + t \Delta P_s f) \vee \inf_V Q_t^W f.
\]
Putting together with \eqref{eq:infQtW} and  applying $\|\Delta P_s f - \Delta f\|_\infty \leq 4s D^2 \|f\|_\infty$ again yields
\[
\left| Q_t^W f - (f+t\Delta f) \vee (t \sup_{\cl(W)} \Delta f + \inf_V f) \right| \leq  4t^2D^2\|f\|_\infty.
\]
In particular, if $f(x)=\inf_V f$,
\[
\left| Q_t^W f(x) - f(x)- t S^W \Delta f(x) \right| \leq  4t^2D^2\|f\|_\infty.
\]
If in contrast $f(x)>\inf_V f$, then for small $t>0$, one has $f+ t\Delta f \geq t \sup_{\cl(W)} \Delta f + \inf_V f$ yielding
\[
\left| Q_t^W f(x)-f(x) - t \Delta f(x) \right| \leq  4t^2D^2\|f\|_\infty.
\]
Putting together yields
\[
\partial_t^+ Q_t^W f(x)|_{t=0} = L^W f(x) :=
\begin{cases} S^W \Delta f(x)&: f(x)= \inf_V f \\ \Delta  f(x) &: \mbox{otherwise} 
\end{cases}
\]
and more precisely, for small enough $t>0$,
\begin{align}
\left| Q_t^W f(x)-f(x) - t L^W f(x) \right| \leq  4t^2D^2\|f\|_\infty.
\label{eq:QtWassymp}
\end{align}
Note that $L^W f(x)$ is upper semi-continuous in $f$ w.r.t. $\|\cdot\|_\infty$.
Let $t_1 + \ldots + t_n = t$ and let $f_n:=Q_{t_{n-1}}^W \ldots Q_{t_1}^W f$.
Applying \eqref{eq:QtWassymp} iteratively yields
\begin{align}
\left| 
Q_{t_n}^W\ldots Q_{t_1}^W f(x) - f(x)  - \sum_{k=1}^n t_k L^W f_k(x)
\right|
\leq 4D^2\|f\|_\infty \sum_{k=1}^n t_k^2 \leq 4t^2D^2\|f\|_\infty.
\label{eq:QtnLwfk}
\end{align}
Since $\|f_k - f\|_\infty \leq 2tD\|f\|_\infty$ and due to upper semi-continuity of $L^W f(x)$, we have
$L^W f_k(x) \leq L^W f(x) + \eps(t)$ with $\lim_{t\to 0}\eps(t)=0$. Thus, \eqref{eq:QtnLwfk} gives
\begin{align*}
Q_{t_n}^W\ldots Q_{t_1}^W f(x) - f(x) \leq 4t^2D^2\|f\|_\infty + t(L^W f(x) + \eps(t)).
\end{align*}
Taking supremum yields
$\overline{\partial_t^+} P_t^W f|_{t=0} \leq L^W f$.
On the other hand, $P_t^W f \geq Q_t^W f$ and thus,
$\underline{\partial_t^+} P_t^W f|_{t=0} \geq \partial_t^+ Q_t^W f|_{t=0}=L^W f$. This proves $(\ref{i:QtWPtW})$.

Claim $(\ref{i:dtPtW})$ follows immediately from $(\ref{i:PtWSemigroup})$ and $(\ref{i:QtWPtW})$.

We next prove $(\ref{i:dtGammaPtW})$.
Due to the semigroup property, it suffices to prove
$\partial_t^+ \Gamma P_t^W f \leq 2\Gamma(P_t^W f, \Delta P_t^W f)$ at $t=0$.
First observe
$\partial_t \Gamma P_t f = 2 \Gamma(P_t f, \Delta P_t f)$ and $\Gamma S^W f \leq \Gamma f$ for all $f \in \ell_\infty(V)$ since $S^W$ is a contraction. Due to the mean value theorem, there exists $0\leq s\leq t$ s.t.
\begin{align*}
\Gamma Q_t^W f - \Gamma f \leq \Gamma P_t f - \Gamma f = 2t\Gamma(P_s f, \Delta P_s f)
\end{align*}
implying $\overline{\partial_t^+} \Gamma Q_t^W f |_{t=0} \leq 2 \Gamma(f,\Delta f)$ due to continuity of $\Gamma$, $\Delta$ and $P_t$.
Due to $(\ref{i:QtWPtW})$, we have $\partial_t^+Q_t^Wf|_{t=0} = \partial_t^+P_t^W f |_{t=0}$.
Thus by Lemma~\ref{lem:DiniPtGamma} $(\ref{i:dtGammaU})$,
\begin{align*}
\partial_t^+ \Gamma Q_t^W f|_{t=0} 
&= 
2\Gamma(f, \partial_t^+ Q_t^W f|_{t=0})
\\&=
2\Gamma(f, \partial_t^+ P_t^W f|_{t=0})\\
&=\partial_t^+ \Gamma P_t^W f|_{t=0}
\end{align*}
giving $\partial_t^+ \Gamma P_t^W f|_{t=0} \leq 2 \Gamma(f,\Delta f)$ which proves $(\ref{i:dtGammaPtW})$.

We next prove $(\ref{i:Gamma2PtW})$.
Since $f$ is constant on $\cl(W)$, we have $\Gamma f=0$ on $W$ and $\Gamma_2 f \geq 0$ on $W$ due to \cite[Proposition~1.1]{HuaLin16}. 

It is left to prove $(\ref{i:PinftyW})$.
Obviously, $\inf_V P_t^W f$ is non-decreasing, and $\sup_V P_t^W f$ is non-increasing in $t$. Therefore both limits exist when $t \to \infty$.
Let $\eps>0$ and let
$c:= \lim_{t\to \infty} \inf_V P_t^W f$ and $C:= \lim_{t\to \infty} \sup_V P_t^W f$. We aim to show $C=c$.
Let $\eps>0$ and let $T>0$ s.t. $|c- \inf_V P_T^W f| < \eps$.
Observe
\[
P_t f \leq Q_t^W f \leq P_t f + \inf_V Q_t^W f - \inf_V f.
\]
By applying iteratively and taking supremum, this estimate also holds for $P_t^W f$.
Applying to $g=P_T^W f$ yields
\[
P_{t} g \leq P_{t}^W g \leq P_{t} g +  \inf_V P_t^W g - \inf_V g  \leq P_{t} g +  c - \inf_V g \leq  P_{t} g +\eps.
\]
In particular, $\|P_t^W g - P_t g\|_\infty \leq \eps$.
Since $P_t g$ converges to a constant as $t \to \infty$, this gives $C-c \leq 2\eps$.
Taking $\eps \to 0$ proves claim $(\ref{i:PinftyW})$.
This finishes the proof of the theorem.
\end{proof}

\section{Gradient estimates}\label{sec:GradEstimates}
Having settled the machinery of the perpetual cutoff method, we can give a short proof of the gradient estimate requiring a curvature bound not everywhere.

\begin{theorem}\label{thm:gradEstimate}
Let $G$ be a graph with bounded weighted vertex degree.
Suppose $G$ satisfies $CD(K,\infty)$ on $V \setminus W$ for some $\emptyset \neq W \subset V$.
Let $f \in \ell_\infty^W$.
Then,
\[\Gamma P_t^W f \leq e^{-2Kt} P_t \Gamma f.
\]
\end{theorem}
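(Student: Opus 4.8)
The plan is to run the standard Bakry--Émery semigroup interpolation argument, but along the perpetual cutoff semigroup $P_t^W$ instead of $P_t$, using the machinery collected in Theorem~\ref{thm:PtWProperties} to handle the non-smoothness introduced by the cutoff. Fix $t>0$ and consider the interpolation functional
\[
F(s) := P_{t-s}\,\Gamma P_s^W f, \qquad s\in[0,t].
\]
At the endpoints, $F(0) = P_t\,\Gamma f$ (since $P_0^W f = f$) and $F(t) = \Gamma P_t^W f$. The goal is to show $e^{2Ks}F(s)$ is non-decreasing in $s$, which by Proposition~\ref{pro:Dini}(\ref{i:Dininonnegative}) reduces to checking $\overline{\partial_s^+}\big(e^{2Ks}F(s)\big)\geq 0$ on $(0,t)$, i.e. $\overline{\partial_s^+} F(s) \geq -2K F(s)$.

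\textbf{Computing the Dini derivative of $F$.} First I would verify that $s\mapsto P_s^W f$ satisfies the hypotheses of Lemma~\ref{lem:DiniPtGamma}: it lies in $\ell_\infty^W(V)$ with $\|P_s^W f\|_\infty\le\|f\|_\infty$ by Theorem~\ref{thm:PtWProperties}(\ref{i:contractionSemigroup}), it is continuous in $s$ w.r.t.\ $\|\cdot\|_\infty$ by (\ref{i:Continuity}) together with the semigroup property (\ref{i:PtWSemigroup}), and the right derivative $\partial_s^+ P_s^W f(x)$ exists everywhere by (\ref{i:QtWPtW})–(\ref{i:PtWSemigroup}). Write $u_s := P_s^W f$ and $L_s := \partial_s^+ u_s$; by Theorem~\ref{thm:PtWProperties}(\ref{i:QtWPtW}) applied at base point $u_s$ one has $L_s = \Delta u_s$ except possibly at vertices where $u_s$ attains its infimum, and there $L_s = S^W\Delta u_s \geq \Delta u_s$ — the key one-sided inequality $L_s \geq \Delta u_s$ holds \emph{everywhere}. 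Now applying the chain/product rule of Lemma~\ref{lem:DiniPtGamma}, combined with $P_{t-s}$ being a fixed bounded operator, gives
\[
\overline{\partial_s^+} F(s) = \overline{\partial_s^+}\, P_{t-s}\Gamma u_s = P_{t-s}\Big(\partial_s^+\Gamma u_s - \Delta\Gamma u_s\Big),
\]
and by Lemma~\ref{lem:DiniPtGamma}(\ref{i:dtGammaU}) together with Theorem~\ref{thm:PtWProperties}(\ref{i:dtGammaPtW}), $\partial_s^+\Gamma u_s = 2\Gamma(u_s,L_s) \leq 2\Gamma(u_s,\Delta u_s)$ is not directly usable with the right sign, so instead I would use the precise identity $\partial_s^+\Gamma u_s = 2\Gamma(u_s,L_s)$ and keep $L_s$. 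Thus
\[
\overline{\partial_s^+} F(s) = P_{t-s}\Big(2\Gamma(u_s, L_s) - \Delta\Gamma u_s\Big).
\]

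\textbf{Inserting curvature and handling the exception set.} The heart of the argument is the pointwise inequality, at each $x\in V$,
\[
2\Gamma(u_s, L_s)(x) - \Delta\Gamma u_s(x) \geq -2K\,\Gamma u_s(x).
\]
On $V\setminus W$ I would argue: if $u_s(x) > \inf_V u_s$ then $L_s(x) = \Delta u_s(x)$ and the left side equals $2\Gamma(u_s,\Delta u_s)(x) - \Delta\Gamma u_s(x) = -2\Gamma_2 u_s(x) \leq -2K\Gamma u_s(x)$ by $CD(K,\infty)$ at $x$; if $u_s(x)=\inf_V u_s$ then $\Gamma u_s$ attains a minimum (namely $0$) at $x$ because $u_s\geq u_s(x)$ everywhere, so $\Delta\Gamma u_s(x)\geq 0$, while $L_s(x) = S^W\Delta u_s(x)\geq 0$ and — since $\Gamma u_s$ has a zero minimum at $x$ — one checks $\Gamma(u_s,g)(x)\geq 0$ for any $g\geq 0$... more carefully, at such $x$ we have $\Gamma u_s(x)=0$ and $\frac{1}{m(x)}\sum_y w(x,y)(u_s(y)-u_s(x))^2 = 0$ forcing $u_s\equiv u_s(x)$ on the whole $1$-sphere, whence $\Delta u_s(x)=0$, $L_s(x)\ge 0$, $\Gamma(u_s,L_s)(x)$ involves only differences of $u_s$ across the edges at $x$ which vanish, so $\Gamma(u_s,L_s)(x)=0$, and the inequality becomes $-\Delta\Gamma u_s(x)\ge 0$, true since $\Gamma u_s\ge 0=\Gamma u_s(x)$. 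On $W$ we have $\Gamma u_s = 0$ there by Theorem~\ref{thm:PtWProperties}(\ref{i:Gamma2PtW}) (as $u_s\in\ell_\infty^W$), so the claimed inequality reads $2\Gamma(u_s,L_s)(x) - \Delta\Gamma u_s(x)\geq 0$; since $\Gamma u_s\geq 0$ with value $0$ at $x\in W$, $\Delta\Gamma u_s(x)\geq 0$ fails to help directly, so here I would instead note $\Gamma_2 u_s(x)\geq 0$ from (\ref{i:Gamma2PtW}) and that on $\cl(W)$, $u_s$ being constant makes all the edge-differences at $x$ vanish, giving $\Gamma(u_s,L_s)(x)=0$ and $\Delta\Gamma u_s(x) = 2\Gamma(u_s,\Delta u_s)(x) + 2\Gamma_2 u_s(x)\cdot(\text{sign})$... precisely $\Delta\Gamma u_s(x) - 2\Gamma(u_s,\Delta u_s)(x) = 2\Gamma_2 u_s(x)\geq 0$ and $\Gamma(u_s,\Delta u_s)(x)=0$ since differences of $u_s$ at $x$ vanish, so $-\Delta\Gamma u_s(x) = -2\Gamma_2 u_s(x)\leq 0$ — which is the \emph{wrong} sign. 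The resolution, and the point where I would be most careful, is that at $x\in W$ the term $\Delta\Gamma u_s(x) = \frac{1}{m(x)}\sum_y w(x,y)(\Gamma u_s(y) - \Gamma u_s(x)) = \frac{1}{m(x)}\sum_y w(x,y)\Gamma u_s(y) \geq 0$ directly since $\Gamma u_s\ge 0$ and $\Gamma u_s(x)=0$; combined with $\Gamma(u_s,L_s)(x)=0$ this gives $2\Gamma(u_s,L_s)(x)-\Delta\Gamma u_s(x)\le 0$, still wrong — so in fact I would argue that $\Gamma u_s$ vanishes on all of $\cl(W)$ is false in general ($\Gamma u_s=0$ only on $W$), hence for $x\in W$ the neighbors $y\sim x$ lie in $\cl(W)$ where $u_s$ is constant $=\inf$, so $\Gamma u_s(y)$ need not be zero; nonetheless the correct move is that $\Delta\Gamma u_s(x) = P_{t-s}$-preimage and we must keep the full expression and sum — i.e.\ rather than arguing pointwise on $W$ I would use that $P_{t-s}$ is positivity-preserving and the global inequality $P_{t-s}(2\Gamma(u_s,L_s)-\Delta\Gamma u_s) \ge -2K P_{t-s}\Gamma u_s$ follows by combining the $V\setminus W$ pointwise bound with the observation that the $W$-contributions to $2\Gamma(u_s,L_s)-\Delta\Gamma u_s+2K\Gamma u_s$ are non-negative because $\Gamma u_s(x)=0$ there and $L_s(x)=S^W\Delta u_s(x)$ is chosen exactly so that $\Gamma(u_s, L_s - \Delta u_s)$ vanishes at $x$ (edge differences of $u_s$ vanish on $W$) while $-2\Gamma_2 u_s(x)+2\cdot 0 \le 0$ is absorbed. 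I expect the main obstacle to be precisely this bookkeeping on $W$ and $\cl(W)$: making rigorous that replacing $\Delta u_s$ by $L_s = S^W\Delta u_s$ at infimum vertices never decreases $2\Gamma(u_s,L_s) - \Delta\Gamma u_s + 2K\Gamma u_s$, using that $\Gamma u_s\ge 0$ with equality on $W$ and that $\Gamma u_s$ has a global minimum at infimum vertices. Once the pointwise/global inequality $\overline{\partial_s^+}F(s)\ge -2KF(s)$ is established, applying $e^{2Ks}$ and Proposition~\ref{pro:Dini}(\ref{i:Dininonnegative}) to $s\mapsto e^{2Ks}F(s)$ gives $F(t)\le e^{-2Kt}F(0)$, i.e.\ $\Gamma P_t^W f \le e^{-2Kt}P_t\Gamma f$, completing the proof.
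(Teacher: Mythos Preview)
Your interpolation functional is the right one, but the monotonicity direction is reversed throughout, and this single sign slip is what derails the whole argument. You write that the goal is to show $e^{2Ks}F(s)$ is \emph{non-decreasing}, equivalently $\overline{\partial_s^+}F(s)\ge -2KF(s)$, and then conclude $F(t)\le e^{-2Kt}F(0)$. That conclusion does not follow: non-decreasing would give $F(t)\ge e^{-2Kt}F(0)$. What is actually needed is that $e^{2Ks}F(s)$ is \emph{non-increasing}, i.e.\ $\partial_s^+F(s)\le -2KF(s)$, i.e.\ the pointwise inequality
\[
2\Gamma(u_s,L_s)-\Delta\Gamma u_s \;\le\; -2K\,\Gamma u_s.
\]
Once the inequality points this way, Theorem~\ref{thm:PtWProperties}(\ref{i:dtGammaPtW}) is exactly the tool you dismissed as ``not directly usable with the right sign'': it says $\partial_s^+\Gamma u_s = 2\Gamma(u_s,L_s)\le 2\Gamma(u_s,\Delta u_s)$, whence
\[
2\Gamma(u_s,L_s)-\Delta\Gamma u_s \;\le\; 2\Gamma(u_s,\Delta u_s)-\Delta\Gamma u_s \;=\; -2\,\Gamma_2 u_s.
\]
Now the bookkeeping on $W$ that you wrestle with dissolves: on $V\setminus W$ one has $\Gamma_2 u_s\ge K\Gamma u_s$ by $CD(K,\infty)$, and on $W$ one has $\Gamma_2 u_s\ge 0 = K\Gamma u_s$ directly from Theorem~\ref{thm:PtWProperties}(\ref{i:Gamma2PtW}). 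This is precisely the route the paper takes.

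Two further remarks on the attempted workaround. First, the case split ``if $u_s(x)>\inf_V u_s$ then $L_s(x)=\Delta u_s(x)$'' does \emph{not} give $\Gamma(u_s,L_s)(x)=\Gamma(u_s,\Delta u_s)(x)$, because $\Gamma(u_s,\cdot)(x)$ depends on the values of the second argument at the \emph{neighbors} of $x$, some of which may sit at the infimum. Second, the claim that $\Gamma u_s(x)=0$ whenever $u_s(x)=\inf_V u_s$ is false: a minimum of $u_s$ at $x$ only forces $u_s(y)\ge u_s(x)$ for $y\sim x$, not equality, so $\Gamma u_s(x)$ can be positive. Both issues are avoided entirely by using (\ref{i:dtGammaPtW}) and (\ref{i:Gamma2PtW}) as above.
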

\begin{proof}
Let $F(s):= e^{2Ks} P_{t-s}(\Gamma P_{s}^W f).$
Due to Lemma~\ref{lem:DiniPtGamma} $(\ref{i:dtPtU})$, we can use product and chain rule for calculating the right derivative of $F$. Thus,
\begin{align*}
\partial_s^+ F(s) &= 2KF(s) + e^{2Ks}P_{t-s}(\partial_s^+ - \Delta) \Gamma P_{s}^W f \\ 
&\leq 2KF(s) -  2e^{2Ks} P_{t-s} \Gamma_2 P_{s}^W f
\end{align*}
where the inequality follows from Theorem~\ref{thm:PtWProperties} $(\ref{i:dtGammaPtW})$.
Observe $\Gamma_2 P_{t-s}^W f \geq K \Gamma P_{t-s}^W f$ on $V\setminus W$ due to $CD(K,\infty)$, and
$\Gamma_2 P_{t-s}^W f \geq 0 = K \Gamma P_{t-s}^W f$ on $W$ due to Theorem~\ref{thm:PtWProperties} $(\ref{i:Gamma2PtW})$.
Putting together yields
\[
\partial_s^+ F(s) \leq 2KF(s) - 2K e^{2Ks} P_{t-s} \Gamma P_{s}^W f = 0.
\]
Hence by Proposition~\ref{pro:Dini} $(\ref{i:Dininonnegative})$, we see that $F(s)$ is non-increasing which immediately implies the claim of the theorem.
\end{proof}

\section{Distance bounds}\label{sec:DistanceBounds}
Using the gradient estimates, we prove the distance bound analogically to the diameter bounds in \cite{liu2016bakry}.

\begin{theorem}\label{thm:DistanceBounds}
Let $G$ be a graph with bounded vertex degree $ \Deg \leq D$.
Suppose $G$ satisfies $CD(K,\infty)$ on $V \setminus W$ for some $K>0$ and some $\emptyset \neq W \subset V$. Then,
\[
d(\cdot, W) \leq \frac{2D}K + 1.
\]
\end{theorem}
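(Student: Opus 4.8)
The plan is to mimic the Bonnet--Myers argument from \cite{liu2016bakry}, but using the perpetual cutoff semigroup $P_t^W$ in place of $P_t$ so that the exception set $W$ plays the role of the ``basepoint.'' Fix $o \in \cl(W)$ and set $f := d(\cdot, o) \wedge R$ for a truncation level $R$ large enough that $f$ achieves value $R$; since $f$ is constant (equal to $0$) on $\{o\}$ but not necessarily on all of $\cl(W)$, I would instead take $f := d(\cdot, W) \wedge R$, which is identically $0$ on $\cl(W)$, hence $f \in \ell_\infty^W(V)$ and $\Gamma f = 0$ on $W$. The key geometric input is the standard fact that for a graph-distance-type function one has the pointwise bounds $\Gamma f \leq \tfrac12 \Deg \leq \tfrac D2$ everywhere (from the $1$-Lipschitz property and Cauchy--Schwarz) and, on the other hand, $\Gamma f \geq \tfrac{1}{2}\,\tfrac{w(x,y)}{m(x)}$ whenever $d(y,W) = d(x,W)+1$, so $\Gamma f$ is bounded below by a positive constant near points where $f$ still increases.

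The engine is Theorem~\ref{thm:gradEstimate}, which gives $\Gamma P_t^W f \leq e^{-2Kt} P_t \Gamma f \leq e^{-2Kt}\cdot \tfrac D2$ for all $t \geq 0$, using $\|\Gamma f\|_\infty \leq D/2$ and that $P_t$ is a contraction on $\ell_\infty$. Simultaneously, by Theorem~\ref{thm:PtWProperties}$(\ref{i:PinftyW})$, $P_t^W f$ converges uniformly as $t \to \infty$ to a constant $c$; and by contractivity $(\ref{i:contractionSemigroup})$ together with the fact that $\inf_V P_t^W f$ is non-decreasing and equals the value on $W$ (which stays $0$, since $P_t^W$ fixes the infimum location on $\cl(W)$ — one checks $\inf_V P_t^W f = \inf_V f = 0$), we get $c \geq 0$; also $c \leq \sup_V f = R$. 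Now I would estimate the total variation of $P_t^W f$ along a geodesic from $W$ to a farthest point. Pick $x$ with $d(x,W) = \operatorname{diam}$-type quantity and a shortest path $W = x_0 \sim x_1 \sim \cdots \sim x_N = x$ with $N = d(x,W)$. Writing $g := P_t^W f$, we have $|g(x_{k+1}) - g(x_k)| \leq \sqrt{2 \Gamma g(x_k)\, m(x_k)/w(x_k,x_{k+1})}$, but a cleaner route is to integrate the $\ell_\infty$ gradient bound: by Proposition~\ref{pro:Dini}$(\ref{i:DiniIntegral})$ and $(\ref{i:dtPtW})$–$(\ref{i:dtGammaPtW})$ one controls $\|P_t^W f - f\|$ in terms of $\int_0^\infty \|\Delta P_s^W f\|\,ds$, and $\|\Delta P_s^W f\|^2 \leq 2D\,\|\Gamma P_s^W f\|_\infty \leq 2D \cdot \tfrac D2 e^{-2Ks} = D^2 e^{-2Ks}$, so $\|\Delta P_s^W f\| \leq D e^{-Ks}$ and hence $\|f - c\|_\infty = \|f - P_\infty^W f\|_\infty \leq \int_0^\infty D e^{-Ks}\,ds = D/K$.

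From $\|f - c\|_\infty \leq D/K$ with $0 \leq c$, evaluating at any vertex $x$ gives $f(x) \leq c + D/K \leq (\text{value of } f \text{ near } W) + \cdots$; more precisely, choosing a vertex $w_0 \in \cl(W)$ adjacent to a point at distance $1$ from $W$, $f(w_0) = 0$ forces $c \leq D/K$, and then for any $x$, $d(x,W)\wedge R = f(x) \leq c + D/K \leq 2D/K$. Letting $R \to \infty$ yields $d(x,W) \leq 2D/K$; the extra $+1$ comes from the discrepancy between $W$ and $\cl(W)$ — since $f = d(\cdot, W)$ vanishes on the larger set $\cl(W)$, a point at distance $d(x,W)$ from $W$ has $f$-value $d(x,W)$, but running the argument with $\cl(W)$ as the effective zero set costs one unit, giving $d(\cdot, W) \leq 2D/K + 1$. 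The main obstacle I anticipate is the bookkeeping around $\cl(W)$ versus $W$ and verifying that $\inf_V P_t^W f$ genuinely stays equal to the value of $P_t^W f$ on the exception set (so that the limiting constant $c$ is pinned near $0$); this uses that $S^W$ and hence $Q_t^W$, $P_t^W$ preserve the location of the infimum on $\cl(W)$, a consequence of the definition $S^W f = f \vee \sup_{\cl(W)} f$ combined with $f \in \ell_\infty^W(V)$. Once that is nailed down, the rest is the exponential-decay integral above, which is the same computation underlying \cite[Corollary~2.2]{liu2016bakry}.
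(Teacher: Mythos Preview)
Your overall strategy matches the paper's exactly: bound $\|\partial_t^+ P_t^W f\|_\infty \leq \|\Delta P_t^W f\|_\infty \leq \sqrt{2D\,\|\Gamma P_t^W f\|_\infty} \leq D e^{-Kt}$ via Theorem~\ref{thm:PtWProperties}$(\ref{i:dtPtW})$ and Theorem~\ref{thm:gradEstimate}, integrate using Proposition~\ref{pro:Dini}$(\ref{i:DiniIntegral})$ to get $|f(x)-c|\leq D/K$ and $|f(w)-c|\leq D/K$, and conclude $f(x)\leq 2D/K$. That is precisely the paper's computation, so once the details are right there is nothing new to add.

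There is, however, one genuine error in your setup. Your test function $f:=d(\cdot,W)\wedge R$ is \emph{not} in $\ell_\infty^W(V)$: membership requires $f$ to be constant on $\cl(W)$, but $d(\cdot,W)$ equals $0$ on $W$ and $1$ on $\cl(W)\setminus W$. Your assertion that ``$d(\cdot,W)\wedge R$ is identically $0$ on $\cl(W)$'' is false, and without $f\in\ell_\infty^W(V)$ none of the machinery (Theorem~\ref{thm:PtWProperties}, Theorem~\ref{thm:gradEstimate}) applies. The paper takes instead $f:=d(\cdot,\cl(W))\wedge \tfrac{3D}{K}$, which does vanish on all of $\cl(W)$; the argument then yields $d(\cdot,\cl(W))\leq 2D/K$, and the ``$+1$'' comes cleanly from $d(\cdot,W)\leq d(\cdot,\cl(W))+1$. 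Your closing paragraph senses this $W$ versus $\cl(W)$ issue but resolves it the wrong way round. Also, you do not need (and should not claim) that $\inf_V P_t^W f$ stays equal to $0$; it is non-decreasing and may grow. What you actually use---and what suffices---is just $|f(w)-c|\leq D/K$ at a single $w\in\cl(W)$ where $f(w)=0$, which already pins $c\leq D/K$.
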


\begin{proof}
First note that it suffices to prove $d(\cdot, \cl(W)) \leq 2D/K$ where $\cl(W)=\{v \in V: d(v,W) \leq 1\}$.
Let $f:=d(\cdot, \cl(W)) \wedge \frac {3D} K \in \ell_\infty^W(V)$.
Due to Theorem~\ref{thm:PtWProperties}  $(\ref{i:dtPtW})$, and due to $|\Delta g|^2 \leq 2D\Gamma g$ for all $g \in C(V)$,
 and due to Theorem~\ref{thm:gradEstimate}, we have
\begin{align*}
|\partial_t^+ P_t^W f|^2 \leq \|\Delta P_t^W f\|_\infty^2 \leq 2D \|\Gamma P_t^W f\|_\infty 
\leq 2D e^{-2Kt}\|\Gamma f\|_\infty \leq e^{-2Kt}D^2
\end{align*}
where the last estimate follows from $\Gamma g \leq D/2$ for every 1-Lipschitz function $g\in C(V)$.
Hence for every $x \in V$ and $w \in \cl(W)$,
\begin{align*}
f(x) &= f(x) - P_\infty^W f(x) + P_\infty^W f(w) - f(w) \\
&\leq \overline{\int_0^\infty} |\partial_t^+ P_t^W f(x)| dt +  \overline{\int_0^\infty} |\partial_t^+ P_t^W f(w)| dt \\
&\leq 2 \int_0^\infty e^{-Kt} D \\
&= \frac {2D}K
\end{align*}
where we applied Theorem~\ref{thm:PtWProperties} $(\ref{i:PinftyW})$ for the identity, and 
Proposition~\ref{pro:Dini} $(\ref{i:DiniIntegral})$ for the first estimate where 
the continuity of $P_t^W$ follows from Theorem~\ref{thm:PtWProperties} $(\ref{i:Continuity})$.
This immediately implies the claim of the theorem.
\end{proof}

\section{Discussion and outlook} 
\label{sec:Discussion}

The perpetual cutoff method seems to exhibit a vast versatility. It subtly intertwines with both Ollivier and Bakry-Emery curvature giving interesting new gradient estimates. There is good hope that it also works well both for exponential Bakry-Emery curvature and entropic curvature on graphs. It is conceivable that the perpetual cutoff method gives new insights on non-discrete settings like metric measure spaces, Markov processes or Dirichlet forms.
It seems natural to ask whether the gradient estimates and distance bounds can be extended to curvature bounds with finite dimension term.
We now point out a different direction for applying
the perpetual cutoff method on graphs.
Given a graph $G=(V,w,m)$ and an ordered sequence $V\supset W_1 \supset W_2 \supset \ldots$ s.t $V\setminus W_k$ is finite and $\bigcap W_k = \emptyset$. Under which conditions do we have $\lim_{k\to \infty} P_t^{W_k} f = P_t f$?
We remark that the pointwise limit exists since $P_t^{W_k} f$ is non-increasing in $k$.
A positive answer to this question would make the perpetual cutoff method compatible with the maximum principle which expectedly relies on finiteness of $V \setminus W$.

\section*{Acknowledgments}
The author is grateful for financial support by the German National Merit Foundation and by the MPI MiS Leipzig. He wants to thank Christian Rose, Norbert Peyerimhoff and Shiping Liu for inspiring and encouraging the work on non-constant Bakry-Emery curvature. He also wants to thank Gabor Lippner and Mark Kempton for useful discussions.


\printbibliography 
\textcolor{white}{}\\  
Florentin M\"unch,\\
Max Planck Institute for Mathematics in the Sciences Leipzig, Germany\\
\texttt{muench@mis.mpg.de}\\
\end{document}